\documentclass[smallextended]{svjour3}
\smartqed  
\usepackage{amsfonts}
\usepackage{amsmath}
\usepackage{graphicx}
\usepackage{epsfig}
\usepackage{amssymb}
\usepackage{tikz}
\usetikzlibrary{positioning}
\usetikzlibrary{shapes}

\usepackage{comment}

\journalname{Graphs and Combinatorics}

\begin{document}
\title{On Dirac and Motzkin problem in discrete geometry}
\author{Jan~Florek}
\institute{J.~Florek \at Faculty of Pure and Applied Mathematics,
 Wroclaw University of Science and Technology,
 Wybrze\.{z}e Wyspia\'nskiego 27,
50--370 Wroc{\l}aw, Poland\\
\email{jan.florek@pwr.edu.pl}}

\date{Received: date / Accepted: date}

\maketitle
\begin{abstract} Dirac and Motzkin conjectured that any set $\mathcal{X}$ of $n$ non-collinear points in the plane has an element incident with at least $\lceil \frac{n}{2} \rceil$ lines spanned by~$\mathcal{X}$.  In this paper we prove that any set  $\mathcal{X}$ of $n$ non-collinear points in the plane, distributed on three lines passing through a common point, has an element incident with at least $\lceil \frac{n}{2} \rceil$ lines spanned by $\mathcal{X}$.

\keywords{Arrangements of points, Dirac's conjecture, Incident-line-number}

\subclass{05B25, 51M16}
\end{abstract} 


\section{Introduction}
We consider a set $\mathcal{X}$ of  $n$ non-collinear points in the plane ($n \geqslant 3$), and  the set $\mathcal{L}$ of lines  \emph{spanned} by $\mathcal{X}$, i.e., the lines passing through at least two points of $\mathcal{X}$. Let $t(\mathcal{X})$ be the maximum number of lines in $\mathcal{L}$ incident with a point of $\mathcal{X}$. Dirac \cite{dirac4} and Motzkin \cite{motzkin9} conjectured that for any set $\mathcal{X}$ of $n$ non-collinear points in the plane, we have $t(\mathcal{X})  \geqslant \lceil\frac{n}{2} \rceil$. Some counterexamples were shown for small values of $n$ (9, 15, 19, 25, 31, 37) by Gr\"{u}nbaum \cite{grunbaum6} (pp. 24--25)  (see also  Gr\"{u}nbaum~\cite{grunbaum7}). An infinite family of counterexamples was given by Felsner \cite{brass3} (p. 313). He constructed a set $\mathcal{X}$ of $n = 6k + 7$ ($k \geqslant 2$) non-collinear points in the plane for which $t(\mathcal{X})  < \lceil\frac{n}{2} \rceil$. Akiyama, Ito, Kobayashi and Nakamura \cite{akiyama1} constructed a set $\mathcal{X}$ of  $n$ non-collinear points in the plane satisfying  $t(\mathcal{X})  < \lceil\frac{n}{2} \rceil$, for every $n \geqslant 8$ except possibly $n = 12k +11$ where $k \geqslant 4$.
It is interesting that some of these sets (for $n = 4k +1$ with $k \geqslant 2$, see Fig.~1)  are distributed on three lines without a common intersection point.
Moreover, another one of these sets (for $n = 4k +1$ with $k \geqslant 2$, see Fig.~2) are distributed on three lines passing through a common point, 
except for one point (see also Florek \cite{florek5}).
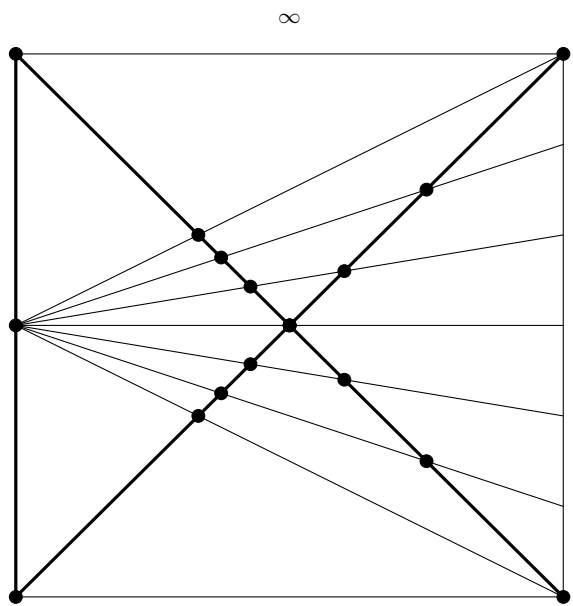
\begin{figure}[htb] 
\begin{center}
\begin{tikzpicture}[scale=1.2]
\draw[very thick] (6,-3)--(0,3)--(0,-3)--(6,3);
\draw (0,3)--(6,3)--(6,-3)--(0,-3);
\draw (0,0)--(6,3);
\draw (0,0)--(6,2);
\draw (0,0)--(6,1);
\draw (0,0)--(6,0);
\draw (0,0)--(6,-1);
\draw (0,0)--(6,-2);
\draw (0,0)--(6,-3);
\coordinate (00) at (0,0);
\coordinate (A0) at (0,3);
\coordinate (A1) at  (intersection of 0,3--6,-3
       and 0,0--6,3);
\coordinate (A2) at  (intersection of 0,3--6,-3
       and 0,0--6,2);
\coordinate (A3) at  (intersection of 0,3--6,-3
       and 0,0--6,1);
\coordinate (A4) at  (intersection of 0,3--6,-3
       and 0,0--6,0);
\coordinate (A5) at  (intersection of 0,3--6,-3
       and 0,0--6,-1);
\coordinate (A6) at  (intersection of 0,3--6,-3
       and 0,0--6,-2);
\coordinate (A7) at  (intersection of 0,3--6,-3
       and 0,0--6,-3);
         
\coordinate (B0) at (0,-3);
\coordinate (B1) at  (intersection of 0,-3--6,3
       and 0,0--6,3);
\coordinate (B2) at  (intersection of 0,-3--6,3
       and 0,0--6,2);
\coordinate (B3) at  (intersection of 0,-3--6,3
       and 0,0--6,1);
\coordinate (B4) at  (intersection of 0,-3--6,3
       and 0,0--6,0);
\coordinate (B5) at  (intersection of 0,-3--6,3
       and 0,0--6,-1);
\coordinate (B6) at  (intersection of 0,-3--6,3
       and 0,0--6,-2);
\coordinate (B7) at  (intersection of 0,-3--6,3
       and 0,0--6,-3);
       
\draw (3,3) node[anchor=south, yshift=3mm]{$\infty$};       

\filldraw [fill=black, draw=black]
(00) circle (2pt)   
       
(A0) circle (2pt)
(A1) circle (2pt)
(A2) circle (2pt)
(A3) circle (2pt)
(A4) circle (2pt)
(A5) circle (2pt)
(A6) circle (2pt)
(A7) circle (2pt)

(B0) circle (2pt)
(B1) circle (2pt)
(B2) circle (2pt)
(B3) circle (2pt)
(B4) circle (2pt)
(B5) circle (2pt)
(B6) circle (2pt)
(B7) circle (2pt);

\end{tikzpicture}
\caption{Akiyama, Ito, Kobayashi and Nakamura construction for $n=17$ points (including~$\infty$)}
\end{center}
\end{figure}

\begin{figure}[!hb]
\begin{center}

\begin{tikzpicture}[scale=1.7]

\coordinate (A0) at (0,0);
\coordinate (A2) at (2,0);
\coordinate (A4) at (4,0);
\coordinate (A6) at (6,0);

\coordinate (B0) at (0,1);
\coordinate (B1) at (1,1);
\coordinate (B2) at (2,1);
\coordinate (B3) at (3,1);
\coordinate (B4) at (4,1);
\coordinate (B5) at (5,1);
\coordinate (B6) at (6,1);

\coordinate (C0) at (0,2);
\coordinate (C2) at (2,2);
\coordinate (C4) at (4,2);
\coordinate (C6) at (6,2);

\draw[very thick] (A0)--(A6);
\draw[very thick] (B0)--(B6);
\draw[very thick] (C0)--(C6);

\draw (A0)--(C0)--(A2)--(C2)--(A4)--(C4)--(A6)--(C6)--(A4)--(C4)--(A2)--(C2)--cycle;

\filldraw[black]
(A0) circle (2pt) 
(A2) circle (2pt) 
(A4) circle (2pt) 
(A6) circle (2pt) 

(B0) circle (2pt)
(B1) circle (2pt)
(B2) circle (2pt)
(B3) circle (2pt)
(B4) circle (2pt)
(B5) circle (2pt)
(B6) circle (2pt)

(C0) circle (2pt)
(C2) circle (2pt)
(C4) circle (2pt)
(C6) circle (2pt);

\draw (B6) node[anchor=west, yshift=0mm, xshift=3mm]{$\infty_1$};
\draw (3,2) node[anchor=south, yshift=3mm]{$\infty_2$};
\end{tikzpicture}
\caption{Akiyama, Ito, Kobayashi and Nakamura construction for $n=17$ points (including $\infty_1$ and  $\infty_2$) distributed, except for $\infty_2$, on three lines passing through $\infty_1$.}
\end{center}
\end{figure}
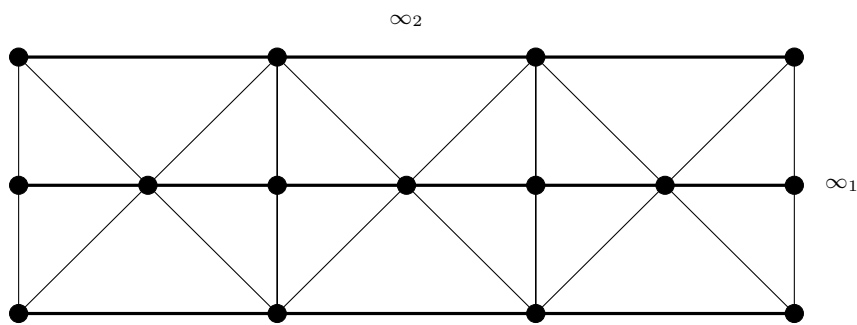

In this paper we prove Theorem 1: any set $\mathcal{X}$ of  $n$ non-collinear points in the plane, distributed on three lines passing through a common point, has an element incident with at least $\lceil \frac{n}{2} \rceil$ lines spanned by $\mathcal{X}$. 

The construction of the set $\mathcal{X}$ in the projective plane given in Fig.~1 and Fig.~2 is clear due to the following lemma (see  Gr\"{u}nbaum \cite{grunbaum6} and  Akiyama, Ito, Kobayashi and Nakamura \cite{akiyama1} (Lemma $D$)): Let $\mathcal{X}$ be a finite set of points in the projective plane such that $\mathcal{X}$ has points at infinity, and let $\mathcal{L}$ be the set of lines spanned by $\mathcal{X}$.  Then the arrangements $(\mathcal{X},\mathcal{L})$ can be transformed to an arrangement in the Euclidean plane.

The ``weak Dirac conjecture'' proved by Beck \cite{beck2} and independently by Szemer\'{e}di and Trotter \cite{szemeredi10} states that there is a constant $c> 0$ such that in every non-collinear set $\mathcal{X}$ of $n$ points in the plane some element is incident with at least $cn$ lines spanned by $\mathcal{X}$. Brass, Moser and Pach \cite{brass3} (p.~313) proposed the following ``strong Dirac conjecture'': there is a constant $c > 0$ such that any set~$\mathcal{X}$ of $n$ points in the plane, not all on a line, has an element which lies on at least $\lceil \frac{n}{2} \rceil - c$ lines spanned by $\mathcal{X}$. Klee and Wagon  \cite{klee8} (p. 29) conjectured that for any set $\mathcal{X}$ of $n$ non-collinear points in the plane, we have $t(\mathcal{X})  \geqslant \lceil \frac{n}{3} \rceil$.
\section{Main result}\label{sec1}

Let $\mathcal{X}$ be a set of $n$ non-collinear points in the plane. A line passing through exactly two points of $\mathcal{X}$ is called an \emph{ordinary line}. A point of $\mathcal{X}$ incident with at least  $\lceil \frac{n}{2} \rceil$ lines spanned by  $\mathcal{X}$ is called an \emph{ordinary point} (Motzkin \cite[p. 452]{motzkin9}). Let $(A,B)$ denote an open segment of a line with end-vertices $A$ and $B$.
\begin{theorem}\label{theorem1}
Any set $\mathcal{X}$ of  $n$ non-collinear points in the plane distributed on three lines passing through a point $A$ $($$A$ is not required to be in the set $\mathcal{X}$$)$ has an ordinary point.
\end{theorem}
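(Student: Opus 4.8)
The plan is to set up coordinates so that the three concurrent lines through $A$ are $\ell_1, \ell_2, \ell_3$, with $A$ possibly lying on none of them as a point of $\mathcal{X}$, and to partition $\mathcal{X}$ into the three groups $\mathcal{X}_i = \mathcal{X} \cap \ell_i$ (a point on two of the lines must be $A$ itself, handled separately). Write $a_i = |\mathcal{X}_i|$ after assigning $A$ to at most one group, so $n = a_1 + a_2 + a_3$ up to the bookkeeping for $A$. The key quantitative observation is that for a point $P \in \mathcal{X}_1$ that is \emph{not} the concurrency point $A$, the lines spanned by $\mathcal{X}$ through $P$ are: the line $\ell_1$ itself (one line), plus the lines joining $P$ to points of $\mathcal{X}_2 \cup \mathcal{X}_3$. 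The number of the latter is at least $\max(a_2, a_3)$ and can be counted more precisely: a line through $P$ meeting both $\ell_2$ and $\ell_3$ off $A$ is counted once though it uses two points, while lines through $P$ and $A$ (if $A\in\mathcal{X}$, or if $A$ happens to coincide with a spanned intersection) are special. So the incidence count at $P$ is roughly $1 + (a_2 + a_3) - (\text{number of ``aligned'' triples } P, Q_2, Q_3)$, and the whole problem reduces to bounding the number of collinear triples with one vertex on each of $\ell_2, \ell_3$ and apex $P$.

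The main step is therefore a counting/averaging argument. Let $m$ denote the number of lines that meet all three of $\ell_1, \ell_2, \ell_3$ in points of $\mathcal{X}$ away from $A$ (the ``transversal lines''). Each transversal line, when we stand at a point $P$ of $\mathcal{X}_1$ on it, merges what would have been two separate connections into one; at a point of $\mathcal{X}_1$ not on any transversal line, the count $1 + a_2 + a_3 - (\text{adjustments})$ is largest. I would show that if $\mathcal{X}_1$ has a point $P \neq A$ lying on at most $\lceil n/2 \rceil - 1 - a_1$ transversal lines — equivalently, if $m$ is not too large relative to $a_1$ — then $P$ is an ordinary point, because its incidence number is at least $1 + a_2 + a_3 - (\text{few})$ and $a_2 + a_3 = n - a_1 \geq n - a_1$, which beats $\lceil n/2\rceil$ once $a_1 \leq n/2$. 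By symmetry we may assume $a_1 \geq a_2 \geq a_3$, so $a_1 \geq \lceil n/3 \rceil$ and, more importantly, $a_2 + a_3 \geq 2a_3$; the delicate regime is when $a_1$ is close to $n/2$ or larger, i.e. when one line carries at least half the points.

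In that delicate regime I would argue directly on the two smaller lines. If $a_1 \geq \lceil n/2\rceil$, take instead a point $Q$ on $\ell_2$ (the second-largest): the lines through $Q$ include $\ell_2$, the line $QA$ if relevant, and the $a_1$ lines joining $Q$ to the points of $\mathcal{X}_1$ — and crucially the lines from $Q$ to $\mathcal{X}_1$ are \emph{all distinct} unless they pass through $A$, since two points of $\ell_1$ determine $\ell_1$. So $Q$ sees at least $a_1$ distinct lines to $\mathcal{X}_1$ plus $\ell_2$ plus at least one line into $\mathcal{X}_3$ (using non-collinearity to guarantee $\mathcal{X}_3 \not\subseteq \ell_2$-through-$Q$), giving incidence number $\geq a_1 + 1 \geq \lceil n/2\rceil + 1 > \lceil n/2 \rceil$. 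Hence the only surviving case is $\max a_i \leq \lceil n/2 \rceil - 1$, where the transversal-line count cannot be large enough to spoil every point on the largest line, and a pigeonhole over $\mathcal{X}_1$ finishes it.

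The hard part I expect is the bookkeeping around the concurrency point $A$ and around transversal lines: lines through $A$ (if $A \in \mathcal{X}$ or if $A$ is forced as a spanned point) behave differently, and a single transversal line can reduce the incidence count at a point of $\ell_1$ by exactly one, so I need a clean inequality of the form ``incidence number at $P \geq 1 + a_2 + a_3 - t(P)$'' where $t(P)$ is the number of transversal lines through $P$, together with the global bound $\sum_{P \in \mathcal{X}_1} t(P) = m \leq \min(a_2, a_3)$ (each transversal line hits $\ell_2$ and $\ell_3$ each once). Averaging then gives some $P$ with $t(P) \leq m / a_1 \leq \min(a_2,a_3)/a_1 \leq 1$, so its incidence number is $\geq a_2 + a_3 \geq n - a_1 \geq \lceil n/2 \rceil$ in the surviving regime. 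Making the off-by-one endpoints of the ceilings match exactly — especially distinguishing $n$ even from $n$ odd and handling small $n$ by hand — will be where the real care goes.
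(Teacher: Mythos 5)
Your reduction of the incidence count at a point $P\in\mathcal{X}_1\setminus\{A\}$ to $1+a_2+a_3-t(P)$, where $t(P)$ counts the ``transversal'' lines through $P$ meeting all three lines in points of $\mathcal{X}$, is sound (modulo the bookkeeping at $A$). The fatal step is the global bound $\sum_{P\in\mathcal{X}_1}t(P)=m\leqslant\min(a_2,a_3)$. What the observation ``each transversal line hits $\ell_2$ and $\ell_3$ each once'' actually gives is an injection from transversal lines into \emph{pairs} $(Q,R)\in\mathcal{X}_2\times\mathcal{X}_3$, i.e.\ $m\leqslant a_2a_3$; it gives the bound $\min(a_2,a_3)$ only for the number of transversals through a single fixed point, not for their total number. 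Distinct transversal lines can share a point of $\ell_2$ while going to different points of $\ell_1$ and $\ell_3$, and $m$ can genuinely be quadratic in $n$. The paper's own Figure~2 already refutes your bound: deleting $\infty_2$ leaves points on three lines concurrent at $\infty_1$, and each of the twelve zigzag segments is a transversal, so $m\geqslant 12$ while $\min(a_2,a_3)=4$. With the correct bound $m\leqslant a_2a_3$, averaging over $\mathcal{X}_1$ only yields a point with $t(P)\lesssim a_2a_3/a_1$, and in the balanced case $a_1=a_2=a_3=n/3$ this gives an incidence number of roughly $1+n/3$ --- the Klee--Wagon threshold, not $\lceil n/2\rceil$. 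So the averaging argument cannot close the middle regime where no line carries half the points, which is precisely the hard case. (Your easy case $a_1\geqslant\lceil n/2\rceil$ is essentially fine, though the claimed extra ``$+1$'' line into $\mathcal{X}_3$ is not guaranteed and also not needed there.)

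The paper avoids this obstruction entirely by exploiting the geometry of the concurrency point rather than counting transversals globally: it orders the six open half-lines around $A$, picks on each the point of $\mathcal{X}$ \emph{closest} to $A$, and observes that for two such points $P_1,Q_1$ on successive half-lines the open segments $(A,P_1)$ and $(A,Q_1)$ are empty, which forces many lines through $P_1$ or $Q_1$ to be ordinary. The remaining transversals --- those crossing the segment from $A$ to the intersection of $P_1Q_1$ with the third line --- are handled by deleting at most $2k$ points to kill $k$ such transversals and applying induction on $|\mathcal{X}|$. If you want to salvage your approach you would need a per-point, not averaged, control of $t(P)$ for a well-chosen $P$, and the ``closest point on a half-line'' device is exactly what supplies it.
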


\begin{proof}
Let $p$, $q$, $r$ be three lines passing through a common point $A$ and suppose $p_1$,  $p_2 \subset p$ ($q_1$,  $q_2 \subset q$ and $r_1$,  $r_2 \subset r$) are open half lines originating at the point~$A$. We can assume that $p_1$, $q_1$, $r_1$, $p_2$, $q_2$, $r_2$ are successive half lines around the point $A$. On every half line $p_1$, $q_1$, $r_1$, $p_2$, $q_2$, $r_2$ passing through a point of~$\mathcal{X}$, we choose respectively a point $P_1$, $Q_1$, $R_1$, $P_2$, $Q_2$, $R_2$ of $\mathcal{X}$, different from $A$, which is closest to $A$. The points $P_1$ and $P_2$ ($Q_1$ and $Q_2$, $R_1$ and $R_2$, respectively) does not exist simultaneously.

Suppose that there are three non-successive in pairs half lines (say  $p_{1}, r_{1}, q_{2}$) such that $\mathcal{X} \subseteq {p_1} \cup {r_1} \cup {q_2} \cup \{A\}$. Notice that $p_1$ and $r_1$ are lying on different sides of the line $q$. Similarly, $r_1$ and $q_2$ ($q_2$ and $p_1$) are lying on different sides of the line $p$ ($r$, respectively). Hence every line spanned by $\mathcal{X}$, except for $p$, $q$,~$r$, is ordinary. Notice that at least two half lines (say $p_1$, $r_1$) pass through an element of $\mathcal{X}$, because $\mathcal{X}$ is not on a line. Hence,  points $P_1$ and $R_1$ exist. Notice that if $p_{1}$ has at least $\lceil \frac{n}{2} \rceil$ points of $\mathcal{X}$, then the point $R_1$ is ordinary. Otherwise ${r_1} \cup {q_2} \cup \{A\}$ has at least $\lceil \frac{n}{2} \rceil$ points of $\mathcal{X}$ and the point $P_1$ is ordinary.

Suppose now that there are two successive half lines (say $p_{1}$ and $q_{1}$) each of which cross the set $\mathcal{X}$. Hence,  the points $P_1$ and $Q_1$ exist. Certainly, if $p$ and~$q$ are both not spanned by $\mathcal{X}$, then points $P_1$ and $Q_1$ are ordinary.  Let $p$ or $q$  be spanned by $\mathcal{X}$. Suppose that $q$ is spanned by $\mathcal{X}$ and $P_1$ is the only element of $\mathcal{X}$ incident with $p$. Then, $A \notin \mathcal{X}$. Hence, if sets $\mathcal{X}\cap q$, $\mathcal{X}\cap r$ are of the same (different) cardinality, then the point $Q_1$ ($P_1$, respectively) is ordinary. By analogy, if $p$ is spanned by $\mathcal{X}$ and $Q_1$ is the only element of $\mathcal{X}$ incident with $q$, then $P_1$ or $Q_1$ is ordinary. Hence, we may assume that
\\[4pt]
($i$) \quad $p$ and $q$ are both spanned by $\mathcal{X}$. 
\\[4pt]
Let us consider the following cases:
 \begin{enumerate}
\item[($a$)]  the line $P_1Q_1$ crosses the half line $r_1$,
\item[($b$)] the line $P_1Q_1$ crosses the half line $r_2$,
\item[($c$)] the lines $P_1Q_1$ and $r$ are parallel.
\end{enumerate}

Case ($a$). We prove, by induction on $|\mathcal{X}|$,  that $P_1$ or $Q_1$ is ordinary. Let $B$ be the intersection point of the line $P_1Q_1$ with the half line $r_1$. 

Suppose that $\mathcal{X}\cap (A,B)$ is empty. Since $\mathcal{X}\cap (A,P_1)$ is empty, we have:
\\[4pt]
($ii$) \quad for every $S\in \mathcal{X}\cap({p\cup r}) \setminus \{A, P_{1}, B\}$ the line $Q_{1}S$ is ordinary. 
\\[4pt]
If $p\cup r$ has at least  $\lceil \frac{n-4}{2} \rceil$ points of $\mathcal{X} \setminus \{A, B, P_{1}, Q_{1}\}$, then, by ($i$) and $(ii)$ the point $Q_1$ is ordinary.  If the line $q$ has at least $\lceil \frac{n-4}{2} \rceil$ points of $\mathcal{X} \setminus \{A, B, P_{1} ,Q_{1}\}$, then, by $(i)$, the point $P_1$ is ordinary.

Let now the set $\mathcal{X}\cap(A,B)$ has $k > 0$ points. Let $(A,B)_0$ be the set of all points $T \in (A,B)$ for which the lines $Q_{1}T$ and $p$ are not parallel, and suppose that $\phi(T)$ is the intersection point of the line $Q_{1}T$ with the line $p$, for every $T \in (A,B)_0$. If we delete from the set $\mathcal{X}$ all elements of $(A,B)$ and all elements of $\phi(\mathcal{X}\cap(A,B)_0)$ we obtain a set $\mathcal{X}'$ with at least $n-2k$ elements. Since $\mathcal{X}\cap q \subset \mathcal{X}'$ and $P_{1}\in \mathcal{X}'$, ($i$) shows that $\mathcal{X}'$ is not all on a line. Hence, by induction, $P_1$ or $Q_1$ is incident with at least $\lceil \frac{n}{2}\rceil - k$ lines spanned by $\mathcal{X}'$. Since $\mathcal{X}\cap(A,Q_1)$, $\mathcal{X}'\cap(A,B)$ and $\mathcal{X}'\cap\phi((A,B)_0)$ are empty sets, the point $P_1$ ($Q_1$) is the only point of the set $\mathcal{X}'$ incident with the line $P_{1}T$ ($Q_{1}T$, respectively) for $T\in{\mathcal{X}\cap(A,B)}$. Hence, the above lines are not spanned by~$\mathcal{X}'$. Since $\mathcal{X}\cap(A,B)$ has $k$ points, $P_1$ or $Q_1$ is incident with at least $\lceil \frac{n}{2} \rceil$ lines spanned by $\mathcal{X}$.

Case ($b$). The proof of case ($b$) is analogous to that of ($a$).

Case ($c$). Since $\mathcal{X}\cap (A,Q_1)$ and $\mathcal{X}\cap (A,P_1)$ are empty sets, we have:
\\[4pt]
$(iii)$ \quad the line $P_{1}S$ is ordinary,  for every $S \in \mathcal{X}\cap r_{1}$,
\\[4pt]
$(iv)$ \quad the line $Q_{1}S$ is ordinary, for every $S\in \mathcal{X}\cap r_{2}$.
\\[4pt]
If  $q \cup r_1$ has at least $\lceil \frac{n-1}{2} \rceil$ points of $\mathcal{X} \setminus \{A\}$, then, by ($i$) and ($iii$), the point $P_1$ is ordinary. If  $p \cup r_{2}$ has at least $\lceil \frac{n-1}{2} \rceil$ points of $\mathcal{X} \setminus \{A\}$, then, by ($i$) and ($iv$), the point $Q_1$ is ordinary.
\qed
\end{proof}

The author declare that no funds, grants, or other support were received during the preparation of this manuscript.

\begin{thebibliography}{99}

\bibitem{akiyama1} Akiyama, J., Ito, H., Kobayashi, M., Nakamura, G.: Arrangements of $n$ Points whose Incident-Line-Numbers are at most $n/2$. Graphs and Combinatorics, 
27, 321--326 (2011)

\bibitem{beck2} Beck, J.: On the lattice property of the plane and some problems of Dirac, Motzkin and Erd\"{o}s in combinatorial geometry. Combinatorica 
3, 281--297 (1983)

\bibitem{brass3} Brass, P., Moser, W.O.J., Pach, J.: Research Problems in Discrete Geometry, Springer, (2005)

\bibitem{dirac4} Dirac G.A.: Collinearity properties of sets of points. Quarterly J. Math. 
2, 221--227 (1951) 

\bibitem{florek5} Florek J. : On some extremal problem in discrete geometry. Mathematical Economics 7 (14), 65 - 70 (2011). 

\bibitem{grunbaum6} Gr\"{u}nbaum, B.: Arrangements and spreads. Regional Conference Series in Mathematics 
10, American Mathematical Society, Providence, RI (1972)

\bibitem{grunbaum7} Gr\"{u}nbaum, B.: A catalog of simplicial arrangements in the real projective plane. Ars Math. Contemporanea 
2, 1--25 (2009)

\bibitem{klee8} Klee, V., Wagon, S.:  Old and new unsolved problems in plane geometry and number theory, The Mathematical
Association of America, (1991)

\bibitem{motzkin9} Motzkin, T.S.: The lines and planes connecting the points of a finite set. Trans. Amer. Math. Soc. 
70, 451--464 (1951)

\bibitem{szemeredi10} Szemer\'{e}di, E., Trotter, W.T.: Extremal problems in discrete geometry. Combinatorica 
3, 381--392 (1983) 
\end{thebibliography}
\end{document}